\newcommand{\Q}{\mathbb{Q}}
\newcommand{\SL}{\mathrm{SL}}
\newcommand{\Sp}{\mathrm{Sp}}
\newcommand{\GL}{\mathrm{GL}}
\renewcommand{\qed}{ $\sqcup\!\!\!\!\sqcap$}
\def\-{\overline}
 \def\R{\mathbb{R}}
 \def\Q{\mathbb{Q}}
\def\Spin{{\rm{Spin}}}
\def\mod{\rm{mod}}
\def\qed{ $\sqcup\!\!\!\!\sqcap$}
\def\<{\langle}
\def\>{\rangle}
\newtheorem{theorem}{Theorem}[section]
\newtheorem{lemma}[theorem]{Lemma}
\newtheorem{corollary}[theorem]{Corollary}
\newtheorem{question}[theorem]{Question}
\title{Property FA is not a profinite property}
\author{Tamunonye Cheetham-West}
\author{Alexander Lubotzky}
\author{Alan W. Reid}
\author{Ryan Spitler}
\address{\newline
\newline
Department of Mathematics,\newline
Faculty of Mathematics and Computer Science,\newline
Weizmann Institute of Science,\newline
Rehovot, Israel}
\email{alex.lubotzky@mail.huji.ac.il }
\address{\newline
Department of Mathematics,\newline
Rice University,\newline
Houston, TX 77005, USA.}
\email{tcw@rice.edu,alan.reid@rice.edu,rfs8@rice.edu}
\thanks{The second author was supported by the European Research Council (ERC) under the European Union's Horizon $2020$ research and innovation program (grant agreement No. $882751$), 
and the fourth author was supported by a N.S.F. Postdoctoral Fellowship, DMS-$2103335$.}
\begin{document}

\begin{abstract}
 We exhibit infinitely many pairs of non-isomorphic finitely presented, residually finite groups $\Delta$ and $\Gamma$ with $\Delta$ having Property FA, $\Gamma$ having a non-trivial action on a tree and $\Delta$ and $\Gamma$ having isomorphic profinite completions. 
\end{abstract}


\keywords{Profinite property, Property FA, profinite rigidity, S-arithmetic}

\maketitle

\centerline{\em{Dedicated to Pavel Zalesskii on the occasion of his $60$th birthday}}
%
%
%
%
%

\section{Introduction}
\label{intro}
Let $\Gamma$ be a finitely generated residually finite group, the {\em profinite completion} $\widehat{\Gamma}$ of $\Gamma$
is the inverse limit of the inverse system consisting of the finite quotients $\Gamma/N$ and the natural epimorphisms $\Gamma/N\rightarrow \Gamma/M$. A property $\mathcal{P}$ for $\Gamma$ is said to be a {\em profinite property} if whenever
$\Delta$ is a finitely generated residually finite group with $\widehat{\Delta}\cong\widehat{\Gamma}$, if $\Gamma$ has $\mathcal{P}$ then $\Delta$ also has $\mathcal{P}$. The abelianization is a profinite property (in particular the first Betti number is), a more subtle profinite property is the first $\ell^2$-Betti number \cite[Corollary 3.3]{BCR}.  On the other hand in \cite{Aka}, it is shown that Property T is not a profinite property, and it 
remains an open question of Remeslennikov \cite[Question 5.48]{MK} as to whether being a free group is a profinite property. As is well-known (see \cite{Wat}), if a group $\Gamma$ has Property T then it also has Property FA of Serre; i.e.
every action of $\Gamma$ on a tree has a global fixed point. The main result of this note strengthens that of \cite{Aka}.
\begin{theorem}
\label{main}
There are infinitely many pairs of non-isomorphic, finitely presented, residually finite groups $\Delta$ and $\Gamma$ satisfying the following:
\begin{enumerate}
\item $\widehat{\Delta}\cong\widehat{\Gamma}$;
\item $\Delta$ has Property T and hence Property FA;
\item $\Gamma$ has neither Property T nor Property FA.
\end{enumerate}
\end{theorem}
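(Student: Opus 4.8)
The plan is to realise $\Delta$ and $\Gamma$ as $S$-arithmetic subgroups of two division algebras over $\mathbb{Q}$ that become isomorphic at every place outside a fixed finite set $S$. The congruence subgroup property then forces their profinite completions to coincide, while the local data inside $S$ is arranged so that $\Delta$ is a lattice in a product of Kazhdan groups (hence has Property T, hence Property FA) whereas $\Gamma$ acquires a $p$-adic rank-one factor and therefore a non-trivial action on the associated Bruhat--Tits tree.

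In detail, fix three distinct rational primes $p,q,r$ and put $S=\{\infty,p,q,r\}$. By the Albert--Brauer--Hasse--Noether theorem there are (unique) central division $\mathbb{Q}$-algebras $D_\Delta$ and $D_\Gamma$, both of degree $4$, with $\mathrm{inv}_v(D_\Delta)=\mathrm{inv}_v(D_\Gamma)=0$ for every $v\notin S$ and with local invariants inside $S$ prescribed by: $D_\Delta$ has invariants $1/4$ at $p$, $3/4$ at $q$, and $0$ at $r$ and $\infty$; $D_\Gamma$ has invariants $1/2$ at $p$, $1/4$ at $q$, $1/4$ at $r$, and $0$ at $\infty$ (in each case the invariants lie in $\tfrac14\mathbb{Z}/\mathbb{Z}$, sum to $0$, and have orders with least common multiple $4$, so such division algebras of degree exactly $4$ exist). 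Let $\mathbf{G}_\Delta$ and $\mathbf{G}_\Gamma$ be the algebraic groups of reduced-norm-one elements of $D_\Delta$ and $D_\Gamma$; these are simply connected, $\mathbb{Q}$-anisotropic inner forms of $\mathrm{SL}_4$. Fixing maximal orders, set $\Delta=\mathbf{G}_\Delta(\mathcal{O}_S)$ and $\Gamma=\mathbf{G}_\Gamma(\mathcal{O}_S)$. Because the ambient groups are $\mathbb{Q}$-anisotropic, $\Delta$ and $\Gamma$ are cocompact $S$-arithmetic lattices; in particular they are finitely generated, residually finite (finitely generated linear groups in characteristic $0$), and finitely presented (a cocompact lattice acts properly cocompactly on the contractible product of the relevant symmetric spaces and Bruhat--Tits buildings). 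Letting $(p,q,r)$ vary yields infinitely many pairwise non-commensurable groups $\Delta$, hence infinitely many non-isomorphic pairs.

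It remains to establish (1)--(3). For (2): $\mathbf{G}_\Delta(\mathbb{R})=\mathrm{SL}_4(\mathbb{R})$ and $\mathbf{G}_\Delta(\mathbb{Q}_r)=\mathrm{SL}_4(\mathbb{Q}_r)$ have rank $3$ and so Property T, while $\mathbf{G}_\Delta(\mathbb{Q}_p)$ and $\mathbf{G}_\Delta(\mathbb{Q}_q)$ are compact (their local algebras being division) and so trivially have Property T; as a lattice in a finite product of Kazhdan groups, $\Delta$ has Property T and hence Property FA. For (3): at $p$ the local algebra of $D_\Gamma$ is $M_2(D'_p)$ with $D'_p$ the quaternion division algebra over $\mathbb{Q}_p$, so $\mathbf{G}_\Gamma(\mathbb{Q}_p)=\mathrm{SL}_2(D'_p)$ has $\mathbb{Q}_p$-rank $1$ and acts on its Bruhat--Tits tree $T$; the projection of $\Gamma$ to this factor is unbounded (otherwise $\Gamma$ could not be a lattice), and since point-stabilisers in $\mathrm{SL}_2(D'_p)$ are compact, $\Gamma$ fixes no point of $T$. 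Thus $\Gamma$ has a non-trivial action on a tree, so it has neither Property FA nor Property T, and in particular $\Delta\not\cong\Gamma$. For (1): both groups have $S$-rank $\geq 2$, so the congruence subgroup property holds (the relevant case of Serre's conjecture for inner forms of $\mathrm{SL}_n$), and strong approximation applies since each group is simply connected with a non-compact archimedean factor; hence each profinite completion is, up to its finite central congruence kernel, the $S$-congruence completion $\prod_{v\notin S}\mathbf{G}(\mathcal{O}_v)$, and these agree because $D_\Delta$ and $D_\Gamma$ are unramified --- hence split, with isomorphic integral models --- at every $v\notin S$. The step I expect to be the real obstacle is precisely the passage from coincidence of $S$-congruence completions to $\widehat{\Delta}\cong\widehat{\Gamma}$: one must show that the congruence kernels of $\mathbf{G}_\Delta$ and $\mathbf{G}_\Gamma$ agree (ideally, that they vanish), which rests on the computation of the metaplectic kernel for $\mathrm{SL}_1(D)$ over $\mathbb{Q}$ and may force a more careful choice of the division algebras or an enlargement of $S$.
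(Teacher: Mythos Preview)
Your construction is correct in outline but takes a genuinely different route from the paper. The paper fixes a single prime $p$, takes $S=\{\infty,p\}$, and uses \emph{$\mathbb{Q}$-isotropic} inner forms of $\SL_4$: namely $\Delta=\SL(4,\mathbb{Z}[1/p])$ (the split form, $\mathbb{Q}$-rank $3$) and $\Gamma$ an $S$-arithmetic subgroup of $\SL(2,A)$ for $A$ the quaternion algebra over $\mathbb{Q}$ ramified exactly at $\infty$ and $p$ (so $\mathbb{Q}$-rank $1$). Then $\Delta$ sits in $\SL(4,\mathbb{R})\times\SL(4,\mathbb{Q}_p)$, both Kazhdan, while $\Gamma$ sits in $\SL(2,\mathbb{H})\times\SL(2,\mathbb{H}_p)$ and inherits the action on the Bruhat--Tits tree of the second factor. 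By contrast you use three primes and two \emph{$\mathbb{Q}$-anisotropic} forms $\SL_1(D)$ for degree-$4$ division algebras, engineered so that one has only compact or higher-rank local factors while the other picks up a $p$-adic $\SL_2(D'_p)$ factor.

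What each approach buys: your cocompact lattices make finite presentation immediate, and the division-algebra framework is flexible. The paper's choice is much lighter on the congruence-kernel step you rightly flag as the obstacle. Because both of the paper's groups are $\mathbb{Q}$-isotropic, the Congruence Subgroup Property (with \emph{trivial} kernel) is classical: Bass--Milnor--Serre for $\SL_4$, and Raghunathan's rank $\geq 1$ theorem for $\SL(2,A)$. Your anisotropic $\SL_1(D)$ groups instead require centrality of the congruence kernel in the anisotropic case, i.e.\ the Margulis--Platonov conjecture for $\SL_1(D)$ of degree $4$, which is substantially deeper input. Note, incidentally, that the part you single out --- the metaplectic kernel --- is actually not the problem: since each of your $S$ contains a finite place where the group is isotropic ($r$ for $\mathbf{G}_\Delta$, $p$ for $\mathbf{G}_\Gamma$), the metaplectic kernel vanishes by Prasad--Rapinchuk; the genuine burden is centrality. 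The paper's isotropic setup sidesteps this entirely.
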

Recall that a finitely generated group $G$ {\em does not} have Property FA if and only if $G$ is an HNN extension or $G$ admits a decomposition as a non-trivial free product with amalgamation.  For a finitely generated group, being an HNN extension is equivalent to having infinite abelianization, which, as noted above, is a profinite property. Hence a corollary of Theorem \ref{main} is the following.

\begin{corollary}
\label{FPA_NOT}
Being a free product with amalgamation is not a profinite property. \end{corollary}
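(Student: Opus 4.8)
The plan is to read the corollary off Theorem~\ref{main} together with the remark that precedes it. Fix one of the pairs $(\Delta,\Gamma)$ produced by Theorem~\ref{main}, so that $\widehat{\Delta}\cong\widehat{\Gamma}$, $\Delta$ has Property FA, and $\Gamma$ does not. By the remark, since $\Gamma$ is finitely generated and fails Property FA, it is either an HNN extension or it admits a non-trivial decomposition as a free product with amalgamation. The first task is to eliminate the HNN alternative; once that is done, $\Gamma$ is a non-trivial amalgam while $\Delta$, having Property FA, is not, even though $\widehat{\Delta}\cong\widehat{\Gamma}$, and the corollary follows.

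To rule out the HNN case I would pass to abelianizations. A group with Property FA (\emph{a fortiori} one with Property T) has finite abelianization, since otherwise it would surject onto $\Z$ and hence act on a line without a global fixed point; thus $\Delta^{\mathrm{ab}}$ is finite. Abelianization commutes with profinite completion, i.e.\ $(\widehat{\Gamma})^{\mathrm{ab}}\cong\widehat{\Gamma^{\mathrm{ab}}}$, where $(\widehat{\Gamma})^{\mathrm{ab}}$ denotes the quotient of $\widehat{\Gamma}$ by the closure of its commutator subgroup, so $\widehat{\Delta}\cong\widehat{\Gamma}$ yields $\widehat{\Gamma^{\mathrm{ab}}}\cong\widehat{\Delta^{\mathrm{ab}}}$, which is finite. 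A finitely generated abelian group whose profinite completion is finite is itself finite, so $\Gamma^{\mathrm{ab}}$ is finite. By the remark (again using that $\Gamma$ is finitely generated), an HNN extension has infinite abelianization; therefore $\Gamma$ is not an HNN extension.

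Putting the two steps together, $\Gamma$ must be a non-trivial free product with amalgamation, whereas $\Delta$ has Property FA and so is not. Since $\widehat{\Delta}\cong\widehat{\Gamma}$, this exhibits a pair witnessing that being a non-trivial free product with amalgamation is not a profinite property; applying the argument to each of the infinitely many pairs of Theorem~\ref{main} produces infinitely many such witnesses. The only genuinely non-formal point is the exclusion of the HNN case: one cannot simply invoke ``not FA implies amalgam'', and the abelianization computation is precisely what rules out the HNN possibility --- everything else is immediate from the cited facts.
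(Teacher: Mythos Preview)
Your proof is correct and follows essentially the same route as the paper: use Theorem~\ref{main} to get the pair, then eliminate the HNN alternative by noting that having infinite abelianization is a profinite property and that $\Delta$ (having Property~T, hence FA) has finite abelianization. The only difference is that you spell out why finiteness of the abelianization passes across $\widehat{\Delta}\cong\widehat{\Gamma}$, whereas the paper simply cites that the abelianization is a profinite property.
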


\noindent To put Theorem \ref{main} and Corollary \ref{FPA_NOT} in context we make two remarks:

\begin{enumerate}
\item The aforementioned examples of Aka \cite{Aka} provide pairs of groups $G$ and $H$ with the same profinite completion with $G$ having Property T,  $H$ does not, but we emphasize that both of $G$ and $H$ has Property FA. This is clear for $G$ since as remarked upon above, having Property T implies Property FA \cite{Wat}. That $H$ has Property FA can be seen as follows: $H$ is an arithmetic lattice in the Lie group $\Spin(n,1)\times \Spin(n,1)$ which has rank $2$, and so $H$ and all subgroups of finite index have finite abelianization. Hence $H$ cannot be an HNN extension. That $H$ does not admit a decomposition as a free product with amalgamation follows from work of Margulis \cite{Mar1} exploiting the fact that $H$
is an arithmetic lattice in a product of rank one groups none of which is p-adic. Alternative proofs of this are now known; see for example \cite{Mo}, \cite{MoS} and \cite{Sh}. In particular the examples of \cite{Aka} cannot be used to prove 
Theorem \ref{main} and Corollary \ref{FPA_NOT}.
\item  It was previously known that there exist non-isomorphic groups that are free products with amalgamation having isomorphic profinite completions. For example, in \cite[Example 4.11]{GZ}, Grunewald and Zalesskii provide examples of non-isomorphic free products with amalgamation where the groups in question are finite, and in \cite[Section 10]{Wil}, examples of non-isomorphic  graph manifold groups are given that are free products with amalgamation associated to non-trivial JSJ decompositions of the 3-manifolds. For emphasis, we note that it remains an open question (see \cite[Question 3]{GJZ}) as to whether being a free product is or is not a profinite property (as remarked upon above,  even for the free group). In connection with this, and in the spirit of this work, in \cite{GJZ}, it was shown that a subgroup of a finitely generated virtually free group $G$ is a free factor if and only if its closure in the profinite completion of $G$ is a profinite free factor
\end{enumerate}
The infinitude of examples stated in Theorem \ref{main} are $S$-arithmetic groups that arise by choice of a rational prime. Fixing a prime $p$, the group $\Delta$ of Theorem \ref{main} is the $S$-arithmetic group $\SL(4,\mathbb{Z}[\frac{1}{p}])$. Hence a corollary of independent interest is the following. Recall that a finitely generated residually finite group
$\Gamma$ is {\em profinitely rigid} if whenever $\Lambda$ is a finitely generated residually finite group with $\widehat{\Lambda}\cong \widehat{\Gamma}$ then $\Lambda \cong \Gamma$.
\begin{corollary}
\label{SL4}
$\SL(4,\mathbb{Z}[\frac{1}{p}])$ is not profinitely rigid.\end{corollary}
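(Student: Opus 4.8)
The plan is to deduce Corollary~\ref{SL4} directly from Theorem~\ref{main} together with the identification, recorded in the sentence preceding the corollary, of the group $\Delta$ produced by the construction. First I would record the standard facts that make $G:=\SL(4,\mathbb{Z}[\tfrac1p])$ a legitimate instance of the $\Delta$ of Theorem~\ref{main}: it is finitely generated and linear, hence residually finite by Mal'cev; it is finitely presented, being an $S$-arithmetic group of higher rank (so that finite presentability follows from the work of Borel--Serre/Behr on arithmetic and $S$-arithmetic groups); and, as a lattice in $\SL(4,\mathbb{R})\times\SL(4,\mathbb{Q}_p)$, each of whose factors has Kazhdan's Property T, the group $G$ itself has Property T and therefore Property FA.

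Next I would invoke Theorem~\ref{main} with the rational prime used in its construction taken to be $p$, so that the group $\Delta$ appearing there is precisely $G=\SL(4,\mathbb{Z}[\tfrac1p])$. Theorem~\ref{main} then furnishes a finitely presented, residually finite group $\Gamma$ with $\widehat{\Gamma}\cong\widehat{G}$ and such that $\Gamma$ fails to have Property FA.

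Finally I would conclude as follows. Property FA is an isomorphism invariant, so $\Gamma\not\cong G$, since $G$ has Property FA while $\Gamma$ does not. Thus $G$ admits a finitely generated residually finite group with isomorphic profinite completion to which it is not isomorphic; by definition, $\SL(4,\mathbb{Z}[\tfrac1p])$ is not profinitely rigid. (As this argument applies verbatim for every rational prime $p$, one obtains infinitely many $S$-arithmetic groups of this form that fail to be profinitely rigid.)

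I expect no genuine obstacle in this last deduction; the real content lies entirely inside Theorem~\ref{main}, specifically in arranging the construction of the pair $(\Delta,\Gamma)$ so that $\Delta$ is \emph{literally} $\SL(4,\mathbb{Z}[\tfrac1p])$ rather than merely a group commensurable with, or abstractly similar to, it. Granting Theorem~\ref{main} and that identification, Corollary~\ref{SL4} is immediate.
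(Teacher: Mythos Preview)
Your deduction is correct and mirrors exactly what the paper does: the corollary is stated as an immediate consequence of the fact that the group $\Delta$ in Theorem~\ref{main} is literally $\SL(4,\mathbb{Z}[\tfrac1p])$, so the existence of the non-isomorphic $\Gamma$ with $\widehat{\Gamma}\cong\widehat{\Delta}$ supplied by that theorem witnesses the failure of profinite rigidity. The paper offers no separate proof beyond this observation, and your write-up simply makes explicit the one-line inference (using that Property FA distinguishes $\Gamma$ from $\Delta$) that the authors leave implicit.
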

It remains open as to whether $\SL(4,\mathbb{Z})$ (or indeed any $\SL(n,\mathbb{Z})$, $n\geq 2$) and also $\SL(n,\mathbb{Z}[\frac{1}{p}])$ for $n=2,3$ are profinitely rigid.

\medskip

\noindent{\bf Acknowledgements:}~{\em  This work was initiated when the second named author visited Rice University to deliver the Chang-Char Tu Distinguished Lecture in Fall 2022, and he wishes to thank the department for its hospitality.
He also wishes to thank the Fields Institute which hosted him whilst this work was carried out. The authors are also very grateful to David Fisher for some useful references, and particularly to Nicolas Monod for correspondence concerning Remark (1) above.}

\section{The groups}
\label{groups}
For the remainder of this note, we fix a rational prime $p$.  As noted in \S \ref{intro}, the group $\Delta$ will be taken to be $\SL(4,\mathbb{Z}[\frac{1}{p}])$, an $S$-arithmetic lattice in $\SL(4,\mathbb{R})\times\SL(4,\mathbb{Q}_p)$.  
We record some facts about $\Delta$.

\begin{lemma}
\label{TnotFA}
The group $\Delta$ is finitely presented, has Property T, and hence has Property FA.\end{lemma}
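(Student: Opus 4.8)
The plan is to verify the three assertions in turn, each by reduction to a standard fact. First, for finite presentation: $\Delta=\SL(4,\Z[\frac{1}{p}])$ is an $S$-arithmetic subgroup of the semisimple (in particular reductive) $\Q$-group $\SL_4$, with $S=\{\infty,p\}$, and I would appeal to the structure theory of $S$-arithmetic groups. Concretely, Borel and Serre study the action of such a group on the product of the symmetric space of $\SL(4,\R)$ with the Bruhat--Tits building of $\SL(4,\Q_p)$ (passing to the Borel--Serre bordification), from which it follows that every $S$-arithmetic subgroup of a reductive $\Q$-group is finitely presented; I would simply quote this. (Here $4\ge 3$, so one in fact obtains type $F_\infty$, but only $F_2$ is needed; and finite generation will in any case follow from Property (T) below, so the only external input genuinely required is finite presentability.)

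Next, for Property (T): as recorded just above, $\Delta$ is a lattice in the locally compact group $G=\SL(4,\R)\times\SL(4,\Q_p)$. Since $4\ge 3$, each factor is a higher-rank simple group over a local field, hence has Kazhdan's Property (T) by the classical results going back to Kazhdan. A finite direct product of groups with Property (T) again has Property (T), so $G$ does, and Property (T) is inherited by lattices, so $\Delta$ has Property (T).

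Finally, Property FA is immediate from Property (T) by Watatani's theorem \cite{Wat}, already quoted in the introduction. The one clause here not delivered by a single-line appeal to Property (T) theory is finite presentation, so that is the point at which I would take care to cite the precise source; but it is entirely classical for $S$-arithmetic groups --- and a fortiori for $\SL(n,\Z[\frac1p])$ with $n\ge 3$ --- so no real difficulty arises.
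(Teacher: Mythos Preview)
Your proof is correct and follows essentially the same approach as the paper: finite presentation via the standard theory of $S$-arithmetic groups (the paper cites \cite[Theorem 5.11]{PlatR} rather than Borel--Serre, but this is the same fact), Property~(T) by the chain ``each factor has (T), hence the product, hence the lattice,'' and Property~FA via Watatani~\cite{Wat}. There is nothing to add.
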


\begin{proof}
Since $\Delta$ is an $S$-arithmetic lattice it is finitely presented by \cite[Theorem 5.11]{PlatR}.
The groups $\SL(4,\mathbb{R})$ and $\SL(4,\mathbb{Q}_p)$ have Property T (see \cite[Theorem III.5.6]{Mar}), and so
the product $\SL(4,\mathbb{R})\times \SL(4,\mathbb{Q}_p)$ also has Property T by \cite[Corollary III.2.10]{Mar}. Since $\Delta$ is a lattice in $\SL(4,\mathbb{R})\times \SL(4,\mathbb{Q}_p)$, it follows that $\Delta$ has Property T 
by \cite[Theorem III.2.12]{Mar}. A theorem of Watatani \cite{Wat} (see also \cite[Theorem III.3.9]{Mar}), then implies that $\Delta$ has Property FA. \end{proof}

\subsection{Congruence completion and the Congruence Subgroup Property}
\label{SL4CSP}
For convenience we recall the construction of {\em congruence subgroups} of the group $\Delta$.

Let $n\in \mathbb{Z}$ be a positive integer co-prime to $p$, then by reducing entries modulo $n$ we have an epimorphism $\phi_n: \Delta=\SL(4,\mathbb{Z}[\frac{1}{p}])\rightarrow \SL(4,\mathbb{Z}/n\mathbb{Z})$ whose kernel $\Delta(n)$ is the {\em principal congruence subgroup} of 
level $n$. A subgroup $H\leq \SL(4,\mathbb{Z}[\frac{1}{p}])$ is called a {\em congruence subgroup} if there exists an $n$ such that $\Delta(n)\leq H$. It is a result of \cite{BMS} that $\Delta$ has the Congruence Subgroup Property in the sense that
{\em every} subgroup of finite index in $\Delta$ contains some principal congruence subgroup. 

An alternative way to describe this is as follows. For any prime $l$ different from $p$, there is an embedding of $\Q$ into $\Q_l$, the $l$-adic numbers. Using this we obtain an embedding of $\SL(4,\Q)$ into the $S$-adelic group $\prod'_{l\neq p}\SL(4,\mathbb{Q}_l)$, which is the restricted product of the $\SL(4,\mathbb{Q}_l)$ and is given the restricted product topology. The closure of $\Delta$ within $\prod'_{l\neq p}\SL(4,\mathbb{Q}_l)$ is the so-called {\em congruence completion} $\overline{\Delta}$. This is a profinite group which is exactly the inverse limit of the congruence quotients $\SL(4,\mathbb{Z}/n\mathbb{Z})$ of $\Delta$, and in fact by the Strong Approximation Theorem, $\overline{\Delta} \cong \prod_{l \neq p} \SL(4,\mathbb{Z}_l)$, which is a compact, open subgroup of $\prod'_{l\neq p}\SL(4,\mathbb{Q}_l)$. That $\Delta$ has the Congruence Subgroup Property is equivalent to $\widehat{\Delta}\cong \overline{\Delta}$.

\subsection{The second group}
\label{group2}

Before defining the group $\Gamma$ itself, we need to define and establish some facts about certain matrix groups over quaternion algebras. 
We refer the reader to \cite[Chapters 1.4 \& 2.3]{PlatR} for more details on the material described here.

For $K$ a field and $A$ a quaternion algebra over $K$, we can define the algebra $\mathrm{M}(2,A)$ of $2 \times 2$ matrices over $A$. Choosing a quadratic field extension $F/K$ which splits $A$, we can embed $A$ into $\mathrm{M}(2,F)$, 
and hence $\mathrm{M}(2,A)$ embeds into $\mathrm{M}(4,F)$ as $K$-algebras. Restricting the determinant from $\mathrm{M}(4,F)$ to $\mathrm{M}(2,A)$ induces the {\em reduced norm map} $\mathrm{Nrd}: \GL(2,A) \to K^\times$ where $\GL(2,A)$ is the group of invertible elements of $\mathrm{M}(2,A)$. It can be shown that $\mathrm{Nrd}$ does not depend on the choice of $F$ nor the embedding of $A$, and that the image of $\mathrm{Nrd}$ lies in $K^\times$ as stated. We now define 
$$\SL(2,A) = \{ g\in \GL(2,A) :  \mathrm{Nrd}(x)=1\}.$$ 
Note that as described in \cite[Chapter 2.3.1]{PlatR}, if $A$ is a division algebra of quaternions, $\SL(2,A)$ is the group of $K$-rational points of a simple, simply connected algebraic group defined over $K$ of $K$-rank $1$.

Finally we observe that when $A$ is the split quaternion algebra $\mathrm{M}(2,K)$, the field $F$ could be chosen as $K$ itself. So in this case, $\mathrm{M}(2,A) = \mathrm{M}(4,K)$, $\GL(2,A) = \GL(4,K)$, and $\mathrm{Nrd}$ is just the usual determinant of $\GL(4,K)$ so that $\SL(2,A) = \SL(4,K)$. 

To construct the group $\Gamma$ we first specialize the above discussion, and to that end let $A/\mathbb{Q}$ be the definite quaternion algebra ramified at the infinite place and the place associated to our fixed rational prime $p$ (so that $A$ is a division algebra). As noted above, $\SL(2,A)$ is the group of rational points of a simple, simply connected algebraic group over $\Q$, for which the
$\R$-points (resp. the $\Q_p$-points) of this algebraic group are $\SL(2,\mathbb{H})$ (resp. $\SL(2,\mathbb{H}_p)$) where $\mathbb{H}$ is the usual Hamilton quaternions and $\mathbb{H}_p$ is the unique division quaternion algebra over $\Q_p$. Since $A$ is unramified at any prime $l\neq p$ we have $A \otimes_\mathbb{Q} \Q_l \cong \mathrm{M}(2,\Q_l)$, and 
so as described above, the $\Q_l$-points can be identified with $\SL(4,\mathbb{Q}_l)$.

We now consider the $S$-adelic embedding of $\SL(2,A)$ into $\prod'_{l\neq p}\SL(4,\mathbb{Q}_l)$. We will sometimes suppress the embedding and continue to simply refer to $\SL(2,A)$.
Since $\SL(2,\mathbb{H}) \times \SL(2,\mathbb{H}_p)$ is not compact, by the Strong Approximation Theorem, the image of $\SL(2,A)$ under this embedding is dense \cite[Theorem 7.12]{PlatR}. We define the group $\Gamma$ as: 
$$\Gamma = \SL(2,A) \cap \prod_{l \neq p} \SL(4,\mathbb{Z}_l).$$
Hence $\Gamma$ is an $S$-arithmetic subgroup of $\SL(2,A)$ and so is finitely presented by \cite[Theorem 5.11]{PlatR}. Moreover, $\Gamma$ is an irreducible lattice in $\SL(2,\mathbb{H}) \times \SL(2,\mathbb{H}_p)$. Because $\prod_{l \neq p} \SL(4,\mathbb{Z}_l)$ is an open subgroup of $\prod'_{l\neq p}\SL(4,\mathbb{Q}_l)$, in which $\SL(2,A)$ is dense, $\Gamma$ is also dense in $\prod_{l \neq p} \SL(4,\mathbb{Z}_l)$. Since $\Gamma$ is not integral at $p$, the congruence completion of $\Gamma$ is its closure in $\prod'_{l\neq p}\SL(4,\mathbb{Q}_l)$. Thus we have for the congruence completion $\overline{\Gamma} = \prod_{l \neq p} \SL(4,\mathbb{Z}_l)$. Finally, since the $\Q$-rank of
$\SL(2,A)$ is $1$, and the $S$-rank is $2$ with
$S=\{\infty,p\}$,  a result of Raghunathan \cite{Rag} (see also \cite{PR}) applies to show that the $S$-arithmetic group $\Gamma$ also has the Congruence Subgroup Property, so that $\widehat{\Gamma} \cong \overline{\Gamma}$.

\section{Completing the proof of Theorem \ref{main}}
\label{finish}
It is immediate from \S \ref{SL4CSP} and \S \ref{group2} that $\widehat{\Gamma}\cong\widehat{\Delta}$ since both have the Congruence Subgroup Property and $\overline{\Delta}\cong\overline{\Gamma}$.  Thus to complete the proof we must show that
$\Gamma$ does not have Property FA.  We now describe how this is done, and to that end briefly recall the construction of a tree on which $\Gamma$ acts non-trivially (following \cite[Chapter II]{Serre}).

\subsection{The Bruhat-Tits tree $X$ associated to $\SL(2,\mathbb{H}_p)$}
We include a quick description of the tree $X$ associated to $G=\SL(2,\mathbb{H}_p)$. This is similar to the construction of the tree of $\SL_2$ over a local field in \cite[Chapter II]{Serre}, and indeed, it is implicit in \cite[Chapter II]{Serre} (see in particular the discussion on \cite[p. 74-75]{Serre}) that the local field can be replaced by the case at hand, the division algebra $\mathbb{H}_p$.  We briefly comment on the construction of $X$ following \cite[Chapter II]{Serre}.  

The quaternion algebra $\mathbb{H}_p$ has a $p$-adic valuation $\alpha: \mathbb{H}_p^*\rightarrow \mathbb{Z}$ obtained by
composing the reduced norm on $\mathbb{H}_p$ with the $p-$adic valuation on $\mathbb{Q}_p$. In particular, since $\mathbb{H}_p$ is a division algebra, there is a uniformizer for $\mathbb{H}_p$; i.e. an element $\pi\in\mathbb{H}_p$ 
with $\alpha(\pi)=1$. Furthermore, $\mathbb{H}_p$ has a unique maximal order $\mathcal{O}_p=\{x\in\mathbb{H}_p\,|\,\alpha(x)\geq 0\}$ (where we set $\alpha(0)=\infty$). The tree $X$ is constructed using the lattices in the left $\mathbb{H}_p$-vector space $\mathbb{H}_p^2$, that is the finitely generated left $\mathcal{O}_p$ submodules of $\mathbb{H}_p^2$ which generate all of $\mathbb{H}_p^2$ over $\mathbb{H}_p$. The vertices of the tree $X$ are the $\mathbb{H}_p$-homothety classes of lattices, and two classes of lattices are connected by an edge in $X$ exactly when they can be represented by lattices $L$ and $L'$ with $L' < L$ and $L/L' \cong \mathbb{F}_p$ as $\mathcal{O}_p$-modules. This gives rise to a $(p+1)$-regular tree $X$.

\begin{lemma}
\label{action}
The action of $\Gamma$ on the tree $X$ is non-trivial. In particular $\Gamma$ does not have Property FA nor Property T.\end{lemma}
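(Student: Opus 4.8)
The plan is to use the projection $\mathrm{pr}_p\colon\Gamma\to\SL(2,\mathbb{H}_p)$, through which $\Gamma$ acts on $X$. This map is injective, as it is induced by the inclusion $\mathbb{Q}\hookrightarrow\mathbb{Q}_p$ on matrix entries. Two standard facts about the Bruhat--Tits tree will be used. First, the action of $\SL(2,\mathbb{H}_p)$ on $X$ is proper with compact point stabilizers: the stabilizer of the homothety class of a lattice $L$ is the image of $\{g : gL=L\}$, a compact open subgroup (conjugate to $\SL(2,\mathcal{O}_p)$), and the setwise stabilizer of an edge is likewise a compact open subgroup. Second, $\SL(2,\mathbb{H}_p)$ is non-compact, being the group of $\mathbb{Q}_p$-points of a $\mathbb{Q}_p$-group of $\mathbb{Q}_p$-rank one.

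The key step is to show that $\mathrm{pr}_p(\Gamma)$ is \emph{dense} in $\SL(2,\mathbb{H}_p)$. Let $\mathbf{G}$ be the simply connected, simple algebraic $\mathbb{Q}$-group with $\mathbf{G}(\mathbb{Q})=\SL(2,A)$; since $\mathbf{G}(\mathbb{R})=\SL(2,\mathbb{H})$ is non-compact, the Strong Approximation Theorem \cite[Theorem 7.12]{PlatR}, applied this time with $S=\{\infty\}$, shows that $\mathbf{G}(\mathbb{Q})$ is dense in the restricted product $\prod'_{l}\mathbf{G}(\mathbb{Q}_l)$ over all rational primes $l$, whose $p$-component is $\SL(2,\mathbb{H}_p)$ and whose $l$-component for $l\neq p$ is $\SL(4,\mathbb{Q}_l)$. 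By construction $\Gamma$ is the intersection of $\mathbf{G}(\mathbb{Q})$ with the open subgroup $\SL(2,\mathbb{H}_p)\times\prod_{l\neq p}\SL(4,\mathbb{Z}_l)$ of this restricted product, so $\Gamma$ is dense in that open subgroup, and projecting onto the $p$-component gives that $\mathrm{pr}_p(\Gamma)$ is dense in $\SL(2,\mathbb{H}_p)$. (One may instead invoke that $\Gamma$ is an irreducible lattice in the product $\SL(2,\mathbb{H})\times\SL(2,\mathbb{H}_p)$ of two non-compact groups, hence projects densely to each factor.)

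Finally, suppose for contradiction that the $\Gamma$-action on $X$ fixes a point $x$ of the geometric realization. Then $x$ is either a vertex or lies in the interior of some edge $e$; in the first case $\mathrm{pr}_p(\Gamma)$ lies in the vertex stabilizer of $x$, and in the second case $\gamma e=e$ for every $\gamma\in\Gamma$, so $\mathrm{pr}_p(\Gamma)$ lies in the setwise stabilizer of $e$. Either way $\mathrm{pr}_p(\Gamma)$ is contained in a compact subgroup of $\SL(2,\mathbb{H}_p)$, which is impossible since $\mathrm{pr}_p(\Gamma)$ is dense and $\SL(2,\mathbb{H}_p)$ is non-compact. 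Hence $\Gamma$ acts on $X$ without a global fixed point, so $\Gamma$ does not have Property FA; and since Property T implies Property FA \cite{Wat}, $\Gamma$ does not have Property T either. The part needing the most care is the density claim: one must invoke strong approximation with $S=\{\infty\}$ rather than the set $\{\infty,p\}$ used earlier to describe $\Gamma$, and check that density survives intersecting with the open subgroup imposing integrality at all primes $l\neq p$ and then projecting to the $p$-adic factor.
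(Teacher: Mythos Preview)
Your proof is correct and follows essentially the same approach as the paper: both hinge on the density of $\Gamma$ in $\SL(2,\mathbb{H}_p)$ and conclude via standard Bruhat--Tits tree facts (you use compactness of point stabilizers where the paper uses transitivity on edges, which are two sides of the same coin). Your argument is in fact more careful than the paper's in spelling out why density at $p$ holds, via strong approximation with $S=\{\infty\}$ or via irreducibility of the lattice.
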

\begin{proof}
The group $\Gamma$ is a dense subgroup of $\SL(2,\mathbb{H}_p)$ which inherits the transitive action of $\SL(2,\mathbb{H}_p)$ on the edges of $X$. Thus, there is no global fixed point for the action of $\Gamma$ on $X$ and therefore, $\Gamma$ does not have Property FA. That $\Gamma$ does not have Property T follows as above using \cite{Wat}.
\end{proof}

\noindent Putting everything from \S \ref{groups} and \S \ref{finish} together completes the proof of Theorem \ref{main}.\qed

\section{Final remarks}\label{final}

Inspired by Corollary \ref{SL4}, one might be tempted to reproduce this construction to find other $S$-arithmetic groups which have the same profinite completions as $\SL(2,\mathbb{Z}[\frac{1}{p}])$ or $\SL(3,\mathbb{Z}[\frac{1}{p}])$. However the analogous construction does not produce the desired results in these cases. For $\SL(2,\mathbb{Z}[\frac{1}{p}])$, one would want to consider an $S$-arithmetic subgroup of $A^1$, the group of norm $1$ elements in the quaternion algebra $A$ which we  defined previously. Unfortunately, such an $S$-arithmetic group would be a lattice in the compact group $\mathbb{H}^1 \times \mathbb{H}_p^1$, and hence would be finite.  

In fact, this is
the only other case that needs to be considered when searching for an $S$-arithmetic group with the same profinite
completion as $\SL(2,\mathbb{Z}[\frac{1}{p}])$. Because $\SL(2,\mathbb{Z}[\frac{1}{p}])$ has the Congruence Subgroup
Property, any $S$-arithmetic subgroup with the same profinite completion would have to have the same congruence
completion as well. If it did not, Strong Approximation could be used to provide a finite quotient that
$\SL(2,\mathbb{Z}[\frac{1}{p}])$ does not have. One can then check that $A^1$ is the only other potential
$\Q$-group whose local behavior is consistent with this congruence completion. 

A similar situation occurs in the case of $\SL(3,\mathbb{Z}[\frac{1}{p}])$, which excludes the possibility of a different $S$-arithmetic group having the same profinite completion. Using the ``road map" set out in \cite{BMRS}, one can show that to establish whether or not these groups are profinitely rigid
actually reduces to answering the following question.

\begin{question}
If $\Gamma$ is $\SL(2,\mathbb{Z}[\frac{1}{p}])$ or $\SL(3,\mathbb{Z}[\frac{1}{p}])$, can $\Gamma$ contain a finitely generated proper subgroup $H$ whose inclusion induces an isomorphism $\widehat{H} \cong \widehat{\Gamma}$?
\end{question}

If such an $H$ existed, $(\Gamma,H)$ would be a Grothendieck pair (see \cite{BG}, for example). The case of $\Gamma = \SL(2,\mathbb{Z}[\frac{1}{p}])$ is especially provocative because it splits as an amalgam, which would have significant consequences for the structure of the potential subgroup $H$.

\end{document}